\documentclass[12pt]{article}
\usepackage[utf8]{inputenc}
\usepackage{amssymb,amsmath,amsfonts,amsthm,amscd,latexsym,indentfirst,verbatim,xcolor}
\usepackage[T2A]{fontenc}
\usepackage{geometry}
\def\dbl{\lbrace\kern-3pt\lbrace}
\def\dbr{\rbrace\kern-3pt\rbrace}

\def\End{\textrm{End}}

\def\As{\textrm{As}}
\def\Span{\textrm{Span}}
\def\id{\operatorname{id}}
\def\tr{\textrm{tr}}

\newcommand{\Endrcf}{\operatorname{End}_{\mathrm{rcf}}}

\theoremstyle{plain}
\newtheorem{theorem}{Theorem}[section]
\newtheorem{lemma}[theorem]{Lemma}

\newtheorem*{conjecture*}{Conjecture}

\newtheorem{proposition}[theorem]{Proposition}
\newtheorem{corollary}[theorem]{Corollary}

\theoremstyle{definition}
\newtheorem{definition}[theorem]{Definition}

\newtheorem{remark}[theorem]{Remark}

\begin{document}
\sloppy
\hfill{17B63 (MSC2020)}

\begin{center}
{\Large
Simple double Lie algebras on the Laurent polynomial space}

\smallskip

Vsevolod Gubarev
\end{center}

\begin{abstract}
We construct a simple $\lambda$-double Lie algebra on $k[t,t^{-1}]$ for every nonzero $\lambda\in k$. We then show that the analogous two-sided construction of weight zero is also simple.
Both products admit natural coefficient realizations via row-and-column-finite operators associated with the finitary $\mathbb Z\times\mathbb Z$ matrix algebra.

{\it Keywords}:
double Lie algebra, Laurent polynomials, Rota--Baxter operator, finitary matrices.
\end{abstract}

\section{Introduction}

In 2008, M. Van den Bergh introduced~\cite{DoublePoisson} double Poisson algebras on a given associative algebra as a noncommutative analog of Poisson algebra.
The goal behind this notion was to develop noncommutative Poisson geometry.

A~double Lie algebra is a vector space~$V$ endowed with a double bracket
satisfying the double skew-symmetry and double Jacobi identity,
and no associative multiplication on~$V$ is the part of the structure~\cite{Schedler}.
Every double Lie algebra structure defined on a vector space~$V$ can be uniquely extended to
a~double Poisson algebra structure on the free associative algebra $\As\langle V\rangle$.

It is known that double Lie algebras on a finite-dimensional vector space~$V$ are in one-to-one correspondence with skew-symmetric Rota---Baxter operators of weight~0 on the algebra~$\End(V)$~\cite{DoubleLie,DoublePoissonFree,Schedler}.
In~\cite{Double-0}, the correspondence between double Lie algebras and skew-symmetric RB-operators of weight~0 on the matrix algebra was extended to the infinite-dimensional setting.

In~\cite{DoubleLie}, M. Goncharov and P. Kolesnikov proved
that there are no finite-dimensional simple double Lie algebras.
The example of a~countable-dimensional simple double Lie algebra was found in~\cite{Double-0}.

In~\cite{GoncharovGubarev}, the correspondence between double Lie algebras on a finite-dimensional vector space~$V$ with skew-symmetric Rota---Baxter operators of weight~0 on~$\End(V)$ was extended to the case of nonzero weight. Thus, the notion of double Lie algebra of weight~$\lambda$ appeared.
Moreover, it was proved in~\cite{GoncharovGubarev} that every double Lie algebra structure of nonzero weight defined on a vector space~$V$ can be uniquely extended to a~modified double Poisson algebra~\cite{Arthamonov0} on~$\As\langle V\rangle$,
structure with weaker anti-commutativity and Jacobi identity.

In~\cite{GoncharovGubarev}, it was proved that there are no finite-dimensional simple double Lie algebras of nonzero weight.
The purpose of this paper is to give an example of a~simple double Lie algebra of nonzero weight by extending the double bracket defined in~\cite{GoncharovGubarev} on $k[t]$ to the Laurent polynomial space $k[t,t^{-1}]$.

The weight-one product is
\begin{equation}\label{eq:w1}
\dbl f,g\dbr_1(x,y)=y\frac{f(x)g(y)-g(x)f(y)}{y-x},\qquad f,g\in k[t,t^{-1}],
\end{equation}
where $x=t\otimes1$ and $y=1\otimes t$. The weight-zero product is
\begin{equation}\label{eq:w0}
\dbl f,g\dbr_0(x,y)=\frac{f(x)g(y)-g(x)f(y)}{y-x}.
\end{equation}
Thus the two constructions are related by
\begin{equation}\label{eq:relation}
\dbl f,g\dbr_1(x,y)=y\dbl f,g\dbr_0(x,y).
\end{equation}
Although this relation is elementary, the defining identities and the proofs of simplicity are different enough that we give both arguments in full.

Both double brackets~\eqref{eq:w1} and~\eqref{eq:w0} on $k[t]$ appeared in~\cite{GoncharovGubarev,Double-0}
and actually in terms of B\'{e}zout operators in~\cite{OgievetskyPopov}.

The construction of the simple double Lie algebra of nonzero weight was developed with the assistance of Hyra~\cite{Hyra}.

\section{Definitions and elementary lemmas}

Let $W$ be a vector space over $k$. For $u\in W^{\otimes n}$ and a permutation $\sigma$ of the tensor factors, write $u^\sigma$ for the result of applying $\sigma$ to $u$.

\begin{definition}
Let $\lambda\in k$. A $\lambda$-double Lie algebra is a vector space $W$ equipped with a bilinear map $\dbl\cdot,\cdot\dbr\colon W\times W\to W\otimes W$ such that, for all $a,b,c\in W$,
\begin{equation}\label{eq:anticom}
\dbl a,b\dbr+\dbl b,a\dbr^{(12)}=\lambda(a\otimes b-b\otimes a)
\end{equation}
and
\begin{equation}\label{eq:Jacobi}
\dbl a,\dbl b,c\dbr\dbr_L-\dbl b,\dbl a,c\dbr\dbr_R-\dbl\dbl a,b\dbr,c\dbr_L
=-\lambda(b\otimes\dbl a,c\dbr)^{(12)}.
\end{equation}
The three actions on tensor products are determined by
$$
\dbl a,u\otimes v\dbr_L=\dbl a,u\dbr\otimes v,\quad
\dbl a,u\otimes v\dbr_R=u\otimes\dbl a,v\dbr,\quad
\dbl u\otimes v,a\dbr_L=(\dbl u,a\dbr\otimes v)^{(23)}.
$$
For $\lambda=0$ this is an ordinary double Lie algebra.
\end{definition}

\begin{definition}
A subspace $I\subseteq W$ is an ideal if
$\dbl W,I\dbr+\dbl I,W\dbr\subseteq I\otimes W+W\otimes I$.
A $\lambda$-double Lie algebra is simple if its double product is nonzero and its only ideals are~0 and~$W$.
\end{definition}

We shall use the identifications
$V\otimes V=k[x^{\pm1},y^{\pm1}]$, $V^{\otimes3}=k[x^{\pm1},y^{\pm1},z^{\pm1}]$.
For example, $t^a\otimes t^b$ is identified with $x^a y^b$.

\begin{lemma}\label{lem:tensor-kernel}
Let $I$ be a subspace of a vector space $W$, and let $q\colon W\to W/I$ be the quotient map. Then
\begin{equation}\label{eq:kerqq}
\ker(q\otimes q)=I\otimes W+W\otimes I.
\end{equation}
Moreover, if $v\in W$ and $v\otimes v\in I\otimes W+W\otimes I$, then $v\in I$.
\end{lemma}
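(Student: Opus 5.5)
We prove the identity~\eqref{eq:kerqq} by choosing a complement and working with explicit bases. Pick a complement $U$ of $I$ in $W$, so $W=I\oplus U$, and note that $q$ restricts to an isomorphism $U\to W/I$. Then
\[
W\otimes W=(I\otimes I)\oplus(I\otimes U)\oplus(U\otimes I)\oplus(U\otimes U),
\]
and the first three summands together make up exactly $I\otimes W+W\otimes I$.

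The map $q\otimes q$ kills $I\otimes W$ and $W\otimes I$, so $I\otimes W+W\otimes I\subseteq\ker(q\otimes q)$. On the remaining summand $U\otimes U$, the map $q\otimes q$ is the tensor product of two isomorphisms $U\to W/I$, hence injective. Now write a general element of $W\otimes W$ as $w=w'+u$ with $w'\in I\otimes W+W\otimes I$ and $u\in U\otimes U$. If $(q\otimes q)(w)=0$, then $(q\otimes q)(u)=0$, which forces $u=0$. This gives the reverse inclusion and proves~\eqref{eq:kerqq}.

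For the second claim, suppose $v\otimes v\in I\otimes W+W\otimes I$. By~\eqref{eq:kerqq} this means $q(v)\otimes q(v)=0$ in $(W/I)\otimes(W/I)$. For a vector $\bar v$ in any vector space, $\bar v\otimes\bar v=0$ implies $\bar v=0$: if $\bar v\neq0$, extend it to a basis; then $\bar v\otimes\bar v$ is one of the basis tensors and so is nonzero. Hence $q(v)=0$, that is, $v\in I$.

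Every step is routine linear algebra over a field. The only point that needs care is the injectivity of $q\otimes q$ on $U\otimes U$, and this rests on the fact that a tensor product of injective linear maps between vector spaces is injective, which holds because vector spaces are free.
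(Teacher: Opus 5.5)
Your proof is correct and follows the paper's argument essentially step for step. Both choose a complement of $I$ (the paper's $C$, your $U$), decompose $W\otimes W$ into four summands, note that $q\otimes q$ kills the three summands involving $I$ and is an isomorphism on $U\otimes U$, and then get $q(v)=0$ from $q(v)\otimes q(v)=0$ because a pure tensor of nonzero vectors is nonzero.
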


\begin{proof}
Choose a vector-space complement $C$ such that $W=I\oplus C$. Then
$$
W\otimes W=(I\otimes I)\oplus(I\otimes C)\oplus(C\otimes I)\oplus(C\otimes C).
$$
The restriction of $q\otimes q$ to $C\otimes C$ is an isomorphism onto $(W/I)\otimes(W/I)$, while the other three summands are in its kernel. This proves~\eqref{eq:kerqq}. For the last assertion,~\eqref{eq:kerqq} gives $q(v)\otimes q(v)=0$.
A pure tensor of two nonzero vectors over a field is nonzero. Hence $q(v)=0$, so $v\in I$.
\end{proof}

For a nonzero Laurent polynomial
$f=\sum_{j=a}^b c_jt^j$, $c_ac_b\neq0$,
define its support width by $\operatorname{wd}(f)=b-a$.

The following result is straightforward:

\begin{lemma}\label{lem:support}
Let $I\neq0$ be a subspace of $V$, and choose a nonzero element $f\in I$ of minimal support width $d$. 
If an interval of integral indices has support width strictly smaller than $d$, then the span of the corresponding basis vectors has zero intersection with $I$.
\end{lemma}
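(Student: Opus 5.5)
The argument is a direct contradiction with the minimality of $d$, using only the definition of support width. Fix an interval of integers $[m,n]=\{m,m+1,\dots,n\}$ whose support width $n-m$ is strictly smaller than $d$. Let $U=\Span\{t^m,t^{m+1},\dots,t^n\}\subseteq V$ be the span of the corresponding basis vectors. We must show that $U\cap I=0$.

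Suppose instead that there is a nonzero $g\in U\cap I$. Write $g=\sum_{j=a}^{b}c_jt^j$ with $c_ac_b\neq0$. Since every monomial occurring in $g$ lies in $U$, the extreme exponents satisfy $m\le a\le b\le n$. Hence
$$
\operatorname{wd}(g)=b-a\le n-m<d .
$$
But $g$ is a nonzero element of $I$, and $d$ was chosen as the minimal support width among nonzero elements of $I$. This is a contradiction, so $U\cap I=0$.

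There is essentially no obstacle here. The only point needing care is the convention for the width of an interval: we take it to be $n-m$, the same convention as $\operatorname{wd}$, so the comparison $b-a\le n-m$ is consistent with the definition. The lemma is then used implicitly wherever the paper needs that a proper nonzero ideal cannot contain low-width elements.
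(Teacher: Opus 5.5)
Your proof is correct, and it is the routine argument the paper has in mind when it calls this lemma straightforward (the paper writes out no proof). A nonzero element of the span has its extreme exponents inside the interval, so its support width is at most the interval's width, which is less than $d$; this contradicts the minimality of $d$.
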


\section{The construction of nonzero weight}

Let $e_n=t^n$ for $n\in\mathbb Z$. Define a bilinear map on $V$ by
\begin{equation}\label{eq:defw1}
\dbl f,g\dbr_1(x,y)=y\frac{f(x)g(y)-g(x)f(y)}{y-x}.
\end{equation}
Since $y-x$ divides $f(x)g(y)-g(x)f(y)$ for any $f,g$, the right-hand side belongs to $V\otimes V$.

\begin{proposition}\label{prop:Weight1}
On the basis $(e_n)_{n\in\mathbb Z}$, the double bracket~\eqref{eq:defw1} is
\begin{equation}\label{eq:basis1}
\dbl e_a,e_b\dbr_1=
\begin{cases}
\displaystyle\sum_{j=a}^{b-1} e_j\otimes e_{a+b-j}, & a<b,\\[1.2em]
\displaystyle-\sum_{j=b}^{a-1} e_j\otimes e_{a+b-j}, & a>b,\\[1.2em]
0, & a=b.
\end{cases}
\end{equation}
\end{proposition}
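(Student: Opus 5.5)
By bilinearity it suffices to evaluate \eqref{eq:defw1} on $f=e_a$, $g=e_b$. This gives
$$\dbl e_a,e_b\dbr_1=y\,\frac{x^ay^b-x^by^a}{y-x},$$
and the case $a=b$ yields $0$ at once. The antisymmetry of the numerator in $(f,g)$ shows that $\dbl e_b,e_a\dbr_1=-\dbl e_a,e_b\dbr_1$. So once the case $a<b$ is settled, the case $a>b$ follows by swapping the roles of $a$ and $b$: the formula for $a<b$ with $a,b$ interchanged is $\sum_{j=b}^{a-1}e_j\otimes e_{a+b-j}$, which is symmetric in the exponent sum $a+b$, and we only attach the minus sign.

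For $a<b$ I will factor out $x^ay^a$:
$$x^ay^b-x^by^a=x^ay^a\,(y^{b-a}-x^{b-a}).$$
Then I apply the geometric-sum identity, setting $m=b-a\ge1$:
$$\frac{y^m-x^m}{y-x}=\sum_{i=0}^{m-1}x^iy^{m-1-i}.$$
This identity also explains why the quotient lies in $V\otimes V$. Multiplying through by $x^ay^{a+1}$, the term indexed by $i$ becomes $x^{a+i}y^{b-i}$. I then reindex with $j=a+i$, so that $j$ runs from $a$ to $b-1$, and the exponent of $y$ becomes $b-i=a+b-j$. Under the identification $x^py^q\leftrightarrow e_p\otimes e_q$, this is exactly $\sum_{j=a}^{b-1}e_j\otimes e_{a+b-j}$.

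There is no genuine obstacle here. The only place needing care is the exponent bookkeeping: the extra factor $y$ shifts the second index by one, which is what makes the second tensor factor $e_{a+b-j}$ rather than $e_{a+b-1-j}$. I also need to check that the sign in the case $a>b$ matches \eqref{eq:basis1}, which the antisymmetry argument above handles.
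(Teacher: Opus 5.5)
Your proposal is correct and follows essentially the same route as the paper: the paper likewise expands $y\,\frac{x^ay^b-x^by^a}{y-x}$ as $\sum_{j=a}^{b-1}x^jy^{a+b-j}$ for $a<b$, then gets the case $a>b$ by interchanging $a$ and $b$, with $a=b$ immediate. You simply make explicit the geometric-sum factorization and the antisymmetry in $(f,g)$ that the paper leaves implicit.
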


\begin{proof}
If $a<b$, we have
$$
y\frac{x^a y^b-x^b y^a}{y-x}=\sum_{j=a}^{b-1}x^j y^{a+b-j}.
$$
This is the first case of~\eqref{eq:basis1}. Interchanging $a$ and $b$ gives the second case, and the diagonal case is immediate.
\end{proof}

\begin{proposition}
The double bracket~\eqref{eq:defw1} satisfies~\eqref{eq:anticom} with $\lambda=1$.
\end{proposition}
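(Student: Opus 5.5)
We verify \eqref{eq:anticom} with $\lambda=1$ directly in the polynomial model $V\otimes V=k[x^{\pm1},y^{\pm1}]$. Under this identification the flip $(12)$ acts as the substitution $x\leftrightarrow y$, so for $u(x,y)\in V\otimes V$ we have $u^{(12)}(x,y)=u(y,x)$. Also $a\otimes b-b\otimes a$ corresponds to $f(x)g(y)-g(x)f(y)$ when $a=f$, $b=g$. By bilinearity it suffices to check the identity for arbitrary $f,g\in V$ using formula \eqref{eq:defw1}. Since every term is a genuine Laurent polynomial, we may compute inside the fraction field $k(x,y)$, which contains $k[x^{\pm1},y^{\pm1}]$.

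Write $P(x,y)=f(x)g(y)-g(x)f(y)$. Then $\dbl f,g\dbr_1=yP(x,y)/(y-x)$. Since $P(x,y)$ is antisymmetric in $f$ and $g$, we get $\dbl g,f\dbr_1(x,y)=-yP(x,y)/(y-x)$. Applying $(12)$, i.e.\ swapping $x$ and $y$, gives
$$
\dbl g,f\dbr_1^{(12)}=-x\,\frac{P(y,x)}{x-y}=-x\,\frac{-P(x,y)}{-(y-x)}=-x\,\frac{P(x,y)}{y-x}.
$$
Adding this to $\dbl f,g\dbr_1$ yields
$$
\dbl f,g\dbr_1+\dbl g,f\dbr_1^{(12)}=\frac{(y-x)P(x,y)}{y-x}=P(x,y)=f\otimes g-g\otimes f,
$$
which is \eqref{eq:anticom} with $\lambda=1$.

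The only point needing care is the sign bookkeeping under the flip: both the numerator $P$ and the denominator $y-x$ change sign, while the prefactor $y$ becomes $x$. As an independent check, one can use Proposition~\ref{prop:Weight1} for $a<b$. There $\dbl e_b,e_a\dbr_1^{(12)}=-\sum_{j=a}^{b-1}e_{a+b-j}\otimes e_j=-\sum_{i=a+1}^{b}e_i\otimes e_{a+b-i}$. Adding $\sum_{j=a}^{b-1}e_j\otimes e_{a+b-j}$, the sum telescopes to $e_a\otimes e_b-e_b\otimes e_a$. The case $a>b$ follows by symmetry, and in the case $a=b$ both sides vanish.
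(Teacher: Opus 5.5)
Your proof is correct and matches the paper's argument: you identify the flip $(12)$ with the substitution $x\leftrightarrow y$, track the signs to get $\dbl g,f\dbr_1^{(12)}=-x\,P(x,y)/(y-x)$, and add to obtain $(y-x)P/(y-x)=P=f\otimes g-g\otimes f$. Your extra telescoping check on basis vectors via Proposition~\ref{prop:Weight1} is also correct, though it is not needed.
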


\begin{proof}
Put $N_{f,g}(x,y)=f(x)g(y)-g(x)f(y)$.
After switching the tensor factors in $\dbl g,f\dbr_1$, we get
$$
\dbl g,f\dbr_1(y,x)
=x\frac{g(y)f(x)-f(y)g(x)}{x-y}
=-x\frac{N_{f,g}(x,y)}{y-x}.
$$
Consequently,
$$
\dbl f,g\dbr_1(x,y)+\dbl g,f\dbr_1(y,x)=N_{f,g}(x,y),
$$
which is exactly
$\dbl f,g\dbr_1+\dbl g,f\dbr_1^{(12)}=f\otimes g-g\otimes f$.
\end{proof}

\begin{theorem}
The double bracket~\eqref{eq:defw1} is a double Lie bracket of weight~1. Moreover, the 1-double Lie algebra $(V,\dbl\cdot,\cdot\dbr_1)$ is simple.
\end{theorem}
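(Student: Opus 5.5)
The proof has two parts: the Jacobi identity~\eqref{eq:Jacobi} with $\lambda=1$, and simplicity.

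\emph{Jacobi identity.} I would reduce it to the polynomial case $k[t]$, where the same bracket was already shown in~\cite{GoncharovGubarev} to be a double Lie bracket of weight~1. The key observation is homogeneity. From~\eqref{eq:defw1}, $\dbl t^Nf,t^Ng\dbr_1(x,y)=x^Ny^N\dbl f,g\dbr_1(x,y)$. I would check that each of the four terms in~\eqref{eq:Jacobi}, evaluated on $(t^Na,t^Nb,t^Nc)$, equals $(xyz)^N$ times the same term evaluated on $(a,b,c)$. For instance, in $\dbl a,\dbl b,c\dbr\dbr_L$ the inner bracket acquires the factor $(uz)^N$. The factor $u^N$ is then absorbed into the outer bracket, which is bilinear, and that bracket acquires $(xy)^N$. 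The right-hand side $-(b\otimes\dbl a,c\dbr)^{(12)}$ scales in the same way.

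For arbitrary $a,b,c\in V$, choose $N$ with $t^Na,t^Nb,t^Nc\in k[t]$. The identity holds there, and dividing by the invertible element $(xyz)^N$ of $k[x^{\pm1},y^{\pm1},z^{\pm1}]$ gives it on $V$. If one prefers a self-contained argument, one can instead verify directly the identity of rational functions in $x,y,z$ with denominators $(y-x)(z-y)$ and similar, for generic $f,g,h$. That is a routine computation.

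\emph{Simplicity.} The product is nonzero by Proposition~\ref{prop:Weight1}. Let $I\neq0$ be an ideal and $q\colon V\to V/I$ the quotient map. By Lemma~\ref{lem:tensor-kernel}, $(q\otimes q)\dbl V,I\dbr=0$.

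First suppose $e_a\in I$ for some $a$. Proposition~\ref{prop:Weight1} gives $\dbl e_a,e_{a+2}\dbr_1=e_a\otimes e_{a+2}+e_{a+1}\otimes e_{a+1}$. Applying $q\otimes q$ yields $q(e_{a+1})\otimes q(e_{a+1})=0$, so $e_{a+1}\in I$ by the last part of Lemma~\ref{lem:tensor-kernel}. Similarly, $\dbl e_{a-2},e_a\dbr_1=e_{a-2}\otimes e_a+e_{a-1}\otimes e_{a-1}$ gives $e_{a-1}\in I$. Induction in both directions then gives $I=V$.

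It therefore remains to show that the minimal support width $d$ of nonzero elements of $I$ is $0$. Suppose $d>0$, and let $f=\sum_{j=a}^{a+d}c_je_j\in I$ have width $d$. For every functional $\varphi$ with $\varphi(I)=0$, we have
$$(\id\otimes\varphi)\dbl e_n,f\dbr_1\in I \quad\text{and}\quad (\varphi\otimes\id)\dbl e_n,f\dbr_1\in I.$$
By Lemma~\ref{lem:support}, $I$ meets the span of any $d$ consecutive basis vectors trivially. Hence $\varphi$ can be chosen with arbitrary prescribed values on such a window.

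I would pick $n$ close to the support of $f$, say $n=a+d$ or $n=a-1$, so that the second tensor factors of $\dbl e_n,f\dbr_1$ lie in a short window. Then I would choose $\varphi$ supported on that window so that the contracted element is a nonzero vector of width less than $d$, which is a contradiction. Concretely, for $n=a+d$ the second indices $n+j-i$ lie in $[j+1,a+d]$. Taking $\varphi=e_{a+d}^*$, extended by zero on $I$ and on a complement, picks out the terms with $i=j$. This produces $\sum_{j<a+d}c_je_j$ up to sign, which lies in $I$ and has width below $d$ unless cancellation occurs.

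\emph{Main obstacle.} The difficulty is that $\varphi$ is controlled only on a window, while its values on the rest of $V$ are dictated by the requirement $\varphi(I)=0$. So I must choose $n$ such that all second factors of $\dbl e_n,f\dbr_1$ fall inside a window of width less than $d$, or else combine several values of $n$ to cancel the uncontrolled terms. I would first try $n=a+d$ and $n=a+d+1$ and take a suitable combination. I expect this bookkeeping to be the only real difficulty.
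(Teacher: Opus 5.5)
Your proposal is correct and already complete; the ``main obstacle'' you describe at the end does not exist. You computed yourself that for $n=a+d=b$ the second indices of $\dbl e_b,f\dbr_1$ lie in $[j+1,b]\subseteq[a+1,b]$, and the first indices lie in $[j,b-1]\subseteq[a,b-1]$. The window $\{e_{a+1},\dots,e_b\}$ consists of $d$ consecutive basis vectors, so its support width is $d-1<d$. This is exactly your own criterion and exactly the case covered by Lemma~\ref{lem:support}. Hence $\varphi$ can be prescribed on every second factor that actually occurs, and its values elsewhere (forced by $\varphi(I)=0$) never enter. Take $\varphi|_I=0$, $\varphi(e_b)=1$ and $\varphi(e_{a+1})=\dots=\varphi(e_{b-1})=0$. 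Then
$$(\id\otimes\varphi)\dbl e_b,f\dbr_1=-\sum_{j=a}^{b-1}c_je_j=-(f-c_be_b)\in I.$$
No cancellation can occur, since these are coefficients of distinct basis vectors and $c_a\neq0$. So this is a nonzero element of $I$ of width at most $d-1$ (and then also $e_b\in I$), contradicting minimality. No second value of $n$ and no combination is needed.

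Compared with the paper, your simplicity argument is essentially the same. The paper brackets with the other endpoint, $\dbl e_a,f\dbr_1$, which lies in the same space $U\otimes W$ as your $\dbl e_b,f\dbr_1$, where $U=\Span\{e_a,\dots,e_{b-1}\}$ and $W=\Span\{e_{a+1},\dots,e_b\}$. It then concludes from the injectivity of $q\otimes q$ on $U\otimes W$ together with nonvanishing of the component with index sum $2a+d$. Your slice map $\id\otimes\varphi$ is a hands-on form of that injectivity, with the small bonus of exhibiting an explicit element of $I$ of smaller width. Your propagation step is essentially the paper's.

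The Jacobi identity is where you genuinely differ. The paper verifies it directly in $k[x^{\pm1},y^{\pm1},z^{\pm1}]$ by clearing the denominator $D=(y-x)(z-x)(z-y)$ and showing the numerator cancels. You instead reduce to $k[t]$ via $\dbl t^Nf,t^Ng\dbr_1=(xy)^N\dbl f,g\dbr_1$. The reduction is sound: every term of~\eqref{eq:Jacobi} acquires the factor $(xyz)^N$, which is invariant under $(12)$ and $(23)$ and invertible in the Laurent ring. It also explains conceptually why nothing new happens in passing from $k[t]$ to $k[t,t^{-1}]$. The cost is that the theorem then rests on the polynomial result from the literature, with identical conventions for~\eqref{eq:Jacobi}. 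The paper's computation is self-contained and covers $k[t]$ at the same time.
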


\begin{proof}
Given a~Laurent polynomial $f$, write $f_x=f(x)$, $f_y=f(y)$, and $f_z=f(z)$, and use analogous notation for $g$ and $h$. Let us compute the three terms on the left-hand side of~\eqref{eq:Jacobi}:
\begin{gather*}
T_1=\frac{y}{y-x}\left(f_x\frac{z(g_yh_z-h_yg_z)}{z-y}-f_y\frac{z(g_xh_z-h_xg_z)}{z-x}\right),\\
T_2=\frac{z}{z-y}\left(g_y\frac{z(f_xh_z-h_xf_z)}{z-x}-g_z\frac{y(f_xh_y-h_xf_y)}{y-x}\right),\\
T_3=\frac{z}{z-x}\left(h_z\frac{y(f_xg_y-g_xf_y)}{y-x}+h_x\frac{y(f_zg_y-g_zf_y)}{z-y}\right).
\end{gather*}
The right-hand side of the Jacobi identity, moved to the left, contributes
$T_4=g_y\dfrac{z(f_xh_z-h_xf_z)}{z-x}$.
Thus the Jacobi residual is $J_1=T_1-T_2-T_3+T_4$.

Let $D=(y-x)(z-x)(z-y)$. Multiplying by $D$ and combining the part of $-T_2$ containing $z^2g_y(f_xh_z-h_xf_z)$ with $T_4$, we obtain
\begin{multline}\label{Jacobi1}
\frac{DJ_1}{yz}
 = f_x(g_yh_z-h_yg_z)(z-x)-f_y(g_xh_z-h_xg_z)(z-y)\\
 -g_y(f_xh_z-h_xf_z)(y-x)+g_z(f_xh_y-h_xf_y)(z-x)\\
-(f_xg_y-g_xf_y)h_z(z-y)-h_x(f_zg_y-g_zf_y)(y-x).
\end{multline}
Collecting the coefficients of $z-x$, $z-y$, and $y-x$ reduces this expression to
$$
(f_xg_yh_z-h_xf_yg_z)((z-x)-(z-y)-(y-x))=0.
$$
Hence $DJ_1=0$. The Laurent polynomial ring is an integral domain and $D\neq0$, so $J_1=0$.
This is precisely~\eqref{eq:Jacobi}.

Let $I\neq0$ be an ideal. Choose
$f=\sum_{r=a}^b c_re_r\in I$, $c_ac_b\neq0$,
of minimal support width $d=b-a$. Suppose first that $d>0$, and put
$$
U=\Span\{e_a,e_{a+1},\dots,e_{b-1}\},\quad
W=\Span\{e_{a+1},e_{a+2},\dots,e_b\}.
$$
Lemma~\ref{lem:support} gives $I\cap U=I\cap W=0$. Therefore the quotient map $q\colon V\to V/I$ is injective on both $U$ and $W$, and $q\otimes q$ is injective on $U\otimes W$.

For $1\le r\le d$, formula~\eqref{eq:basis1} gives
$$
\dbl e_a,e_{a+r}\dbr_1=\sum_{s=0}^{r-1}e_{a+s}\otimes e_{a+r-s}\in U\otimes W.
$$
It follows that $\dbl e_a,f\dbr_1\in U\otimes W$.
This tensor is nonzero. Indeed, its terms whose two indices have sum $2a+d$ are exactly
$c_b\sum_{s=0}^{d-1}e_{a+s}\otimes e_{b-s}$,
and this is a nonzero sum of distinct basis tensors. On the other hand, $f\in I$ and $I$ is an ideal, so
$$
\dbl e_a,f\dbr_1\in I\otimes V+V\otimes I=\ker(q\otimes q).
$$
This contradicts the injectivity of $q\otimes q$ on $U\otimes W$. Therefore $d=0$, and $I$ contains a~basis vector $e_n$.

It remains to propagate this information in both directions. Formula~\eqref{eq:basis1} gives
$$
\dbl e_{n-2},e_n\dbr_1=e_{n-2}\otimes e_n+e_{n-1}\otimes e_{n-1}.
$$
The first summand belongs to $V\otimes I$. Hence $e_{n-1}\otimes e_{n-1}\in I\otimes V+V\otimes I$,
and Lemma~\ref{lem:tensor-kernel} yields $e_{n-1}\in I$. Similarly,
$$
\dbl e_{n+2},e_n\dbr_1=-e_n\otimes e_{n+2}-e_{n+1}\otimes e_{n+1}
$$
implies $e_{n+1}\in I$. Repeating these two arguments gives $e_m\in I$ for every $m\in\mathbb Z$. Thus $I=V$.

Finally, the product is nonzero because $\dbl e_0,e_1\dbr_1=e_0\otimes e_1$. Hence the algebra is simple.
\end{proof}

\begin{corollary}
For every nonzero $\lambda\in k$, the product
$$
\dbl f,g\dbr_\lambda=\lambda y\frac{f(x)g(y)-g(x)f(y)}{y-x}
$$
makes $V$ into a simple $\lambda$-double Lie algebra.
\end{corollary}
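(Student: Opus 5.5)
The plan is to deduce the corollary from the preceding theorem by a rescaling argument: $\dbl\cdot,\cdot\dbr_\lambda=\lambda\dbl\cdot,\cdot\dbr_1$. The first step is to check how the two defining identities transform under multiplying the bracket by a scalar $\mu$. Skew-symmetry~\eqref{eq:anticom} is linear in the bracket. So if $\dbl\cdot,\cdot\dbr_1$ satisfies it with weight $1$, then $\lambda\dbl\cdot,\cdot\dbr_1$ satisfies
$$
\lambda\dbl a,b\dbr_1+\lambda\dbl b,a\dbr_1^{(12)}=\lambda(a\otimes b-b\otimes a),
$$
which is~\eqref{eq:anticom} with weight $\lambda$.

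Next I check the Jacobi identity~\eqref{eq:Jacobi}. Each of the three terms on the left uses the bracket twice. Indeed, $\dbl a,\dbl b,c\dbr\dbr_L$ applies the bracket to $b,c$ and then to $a$ and the first tensor factor, and the same holds for the other two terms. Hence for the bracket $\lambda\dbl\cdot,\cdot\dbr_1$ the left-hand side is $\lambda^2$ times the left-hand side for $\dbl\cdot,\cdot\dbr_1$. The right-hand side $-\lambda'(b\otimes\dbl a,c\dbr)^{(12)}$, with weight $\lambda'$, contains the bracket once. For the rescaled bracket with weight $\lambda'=\lambda$, it equals $-\lambda\cdot\lambda(b\otimes\dbl a,c\dbr_1)^{(12)}$. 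By the theorem, the weight-$1$ identity gives
$$
\mathrm{LHS}_1=-(b\otimes\dbl a,c\dbr_1)^{(12)}.
$$
Multiplying this by $\lambda^2$ yields exactly~\eqref{eq:Jacobi} for $\dbl\cdot,\cdot\dbr_\lambda$ with weight $\lambda$. The only care needed is this bookkeeping of homogeneity degrees (degree $2$ on the left, degree $1$ in the bracket times $\lambda$ on the right), which I expect to be the main, though mild, point.

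For simplicity, I note that the ideal condition $\dbl W,I\dbr+\dbl I,W\dbr\subseteq I\otimes W+W\otimes I$ is unchanged when the bracket is multiplied by the nonzero scalar $\lambda$, since the right-hand side is a subspace. So the ideals of $(V,\dbl\cdot,\cdot\dbr_\lambda)$ coincide with those of $(V,\dbl\cdot,\cdot\dbr_1)$, which are only $0$ and $V$ by the theorem. Finally, the product is nonzero because $\dbl e_0,e_1\dbr_\lambda=\lambda e_0\otimes e_1\neq0$ for $\lambda\neq0$.
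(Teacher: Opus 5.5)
Your proposal is correct and is the argument the paper has in mind: the corollary is stated without proof as an immediate consequence of the weight-one theorem via $\dbl\cdot,\cdot\dbr_\lambda=\lambda\dbl\cdot,\cdot\dbr_1$. Your bookkeeping is what that requires: skew-symmetry is linear in the bracket, both sides of the Jacobi identity pick up a factor $\lambda^2$ when the bracket and the weight are scaled together, and ideals and nonvanishing of the product are unchanged under rescaling by a nonzero scalar.
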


\section{The analogous construction of weight zero}
Define
\begin{equation}\label{eq:defw0}
\dbl f,g\dbr_0(x,y)=\frac{f(x)g(y)-g(x)f(y)}{y-x}.
\end{equation}

\begin{proposition}
On the basis $(e_n)_{n\in\mathbb Z}$, the double bracket~\eqref{eq:defw0} is
\begin{equation}\label{eq:basis0}
\dbl e_a,e_b\dbr_0=
\begin{cases}
\displaystyle\sum_{j=a}^{b-1} e_j\otimes e_{a+b-1-j}, & a<b,\\[1.2em]
\displaystyle-\sum_{j=b}^{a-1} e_j\otimes e_{a+b-1-j}, & a>b,\\[1.2em]
0, & a=b.
\end{cases}
\end{equation}
\end{proposition}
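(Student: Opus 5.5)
The plan is to mirror the proof of Proposition~\ref{prop:Weight1}: evaluate \eqref{eq:defw0} on basis monomials by expanding the quotient as a finite geometric sum. The tensor identification $t^i\otimes t^j\leftrightarrow x^iy^j$ then turns the sum into the claimed formula.

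First take $a<b$ and put $f=e_a$, $g=e_b$. The numerator is $x^ay^b-x^by^a=x^ay^a(y^{b-a}-x^{b-a})$. We use the standard identity
$$
\frac{y^{m}-x^{m}}{y-x}=\sum_{i=0}^{m-1}x^{i}y^{m-1-i},\qquad m\ge1.
$$
With $m=b-a$ this gives
$$
\dbl e_a,e_b\dbr_0=\sum_{i=0}^{b-a-1}x^{a+i}y^{b-1-i}.
$$
Substituting $j=a+i$, so that $j$ runs from $a$ to $b-1$, the exponent of $y$ becomes $b-1-(j-a)=a+b-1-j$. Translating back through the identification yields $\sum_{j=a}^{b-1}e_j\otimes e_{a+b-1-j}$, which is the first case of \eqref{eq:basis0}.

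Since the numerator $f(x)g(y)-g(x)f(y)$ is antisymmetric in $(f,g)$, we have $\dbl e_a,e_b\dbr_0=-\dbl e_b,e_a\dbr_0$. Applying the first case with the roles of $a$ and $b$ interchanged gives the second case, because the expression $a+b-1-j$ is symmetric in $a$ and $b$. When $a=b$ the numerator vanishes, so the bracket is $0$.

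There is no genuine obstacle here. The only point needing care is the index bookkeeping: the weight-zero bracket lacks the factor $y$, which lowers the second index by one compared with \eqref{eq:basis1}. The geometric-sum identity holds in $k[x^{\pm1},y^{\pm1}]$ because $y-x$ is a nonzero divisor and the identity already holds in $k[x,y]$.
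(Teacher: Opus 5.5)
Your proposal is correct and follows essentially the same route as the paper, which mirrors the weight-one computation without the factor $y$. Concretely, you expand the quotient as a finite geometric sum for $a<b$, get the case $a>b$ by interchanging $a$ and $b$ (the antisymmetry in $f,g$), and note that the numerator vanishes when $a=b$.
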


\begin{proof}
The proof is analogous to the proof of Proposition~\ref{prop:Weight1}
without the additional factor $y$ occurring in the weight-one product.
\end{proof}

\begin{remark}
Formula~\eqref{eq:basis0} also follows from~\eqref{eq:basis1} and the relation 
$\dbl f,g\dbr_1 = y\dbl f,g\dbr_0$.
\end{remark}

\begin{theorem}
The double bracket~\eqref{eq:defw0} is a double Lie bracket of weight~0. Moreover, the double Lie algebra $(V,\dbl\cdot,\cdot\dbr_0)$ is simple.
\end{theorem}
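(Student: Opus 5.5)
The proof runs in parallel to the weight-one case. First I check double skew-symmetry with $\lambda=0$. Putting $N_{f,g}(x,y)=f(x)g(y)-g(x)f(y)$, swapping the tensor factors in $\dbl g,f\dbr_0$ gives $N_{g,f}(y,x)/(x-y)=-N_{f,g}(x,y)/(y-x)$, so $\dbl f,g\dbr_0+\dbl g,f\dbr_0^{(12)}=0$.

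For the Jacobi identity~\eqref{eq:Jacobi} with $\lambda=0$, I write the three terms $T_1,T_2,T_3$ exactly as in the weight-one proof, but without the factors $y$ and $z$. Multiplying by $D=(y-x)(z-x)(z-y)$ turns $D(T_1-T_2-T_3)$ into a polynomial. It is the same six-term expression as~\eqref{Jacobi1}, except for one change: the term $g_y(f_xh_z-h_xf_z)(y-x)$ now arises directly from $T_2$, rather than after combining with $T_4$. Collecting the coefficients of $z-x$, $z-y$ and $y-x$ gives $(f_xg_yh_z-h_xf_yg_z)\bigl((z-x)-(z-y)-(y-x)\bigr)=0$. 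Since $k[x^{\pm1},y^{\pm1},z^{\pm1}]$ is a domain and $D\neq0$, the Jacobi residual vanishes. The main risk here is sign and bookkeeping errors in this expansion, so I would verify each of the six terms separately.

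For simplicity, let $I\neq0$ be an ideal and pick $f=\sum_{r=a}^bc_re_r\in I$ of minimal support width $d$. Suppose $d>0$. By~\eqref{eq:basis0}, for $1\le r\le d$,
$$\dbl e_a,e_{a+r}\dbr_0=\sum_{s=0}^{r-1}e_{a+s}\otimes e_{a+r-1-s}.$$
Both indices lie in the interval $[a,b-1]$, which has width $d-1<d$. Take $U=\Span\{e_a,\dots,e_{b-1}\}$. Lemma~\ref{lem:support} gives $I\cap U=0$, so $q\otimes q$ is injective on $U\otimes U$. The tensor $\dbl e_a,f\dbr_0$ lies in $U\otimes U$. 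It is nonzero, because its component with index sum $2a+d-1$ is $c_b\sum_{s=0}^{d-1}e_{a+s}\otimes e_{b-1-s}$; only $r=d$ contributes index sum $2a+d-1$. On the other hand, $\dbl e_a,f\dbr_0\in\ker(q\otimes q)$ by Lemma~\ref{lem:tensor-kernel}, a contradiction. Hence $d=0$ and some $e_n\in I$.

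It remains to propagate. First, $\dbl e_n,e_{n+1}\dbr_0=e_n\otimes e_n$, which gives nothing new. Instead I use $\dbl e_{n-1},e_{n+1}\dbr_0=e_{n-1}\otimes e_n+e_n\otimes e_{n-1}$, which lies in $I\otimes V+V\otimes I$ automatically and so does not help either. The right choice is the ideal condition applied to the bracket of $e_n\in I$ with other basis elements. For example,
$$\dbl e_n,e_{n+3}\dbr_0=e_n\otimes e_{n+2}+e_{n+1}\otimes e_{n+1}+e_{n+2}\otimes e_n.$$
The outer terms lie in $I\otimes V+V\otimes I$, so $e_{n+1}\otimes e_{n+1}$ does as well, and Lemma~\ref{lem:tensor-kernel} gives $e_{n+1}\in I$. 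Symmetrically,
$$\dbl e_{n-3},e_n\dbr_0=e_{n-3}\otimes e_n+e_{n-2}\otimes e_{n-1}+e_{n-1}\otimes e_{n-2}.$$
This is less direct, so I would instead use $\dbl e_{n-2},e_n\dbr_0=e_{n-2}\otimes e_{n-1}+e_{n-1}\otimes e_{n-2}$ together with the quotient: applying $q\otimes q$ kills both terms exactly when $q(e_{n-1})=0$ or $q(e_{n-2})=0$, since $q(e_{n-2})\otimes q(e_{n-1})+q(e_{n-1})\otimes q(e_{n-2})=0$ with both factors nonzero forces them to be proportional, and then $2q(e_{n-1})\otimes q(e_{n-1})$-type relations arise. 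To avoid characteristic issues, I would prefer the first-type argument: once $e_n,e_{n+1}\in I$, induction upward works via $\dbl e_m,e_{m+3}\dbr_0$. For the downward direction, I would use $\dbl e_{n-2},e_{n+1}\dbr_0=e_{n-2}\otimes e_n+e_{n-1}\otimes e_{n-1}+e_n\otimes e_{n-2}$. Given $e_n\in I$, the outer terms lie in the kernel, so $e_{n-1}\in I$ by Lemma~\ref{lem:tensor-kernel}. Thus $I=V$. Finally, $\dbl e_0,e_1\dbr_0=e_0\otimes e_0\neq0$, so the product is nonzero and the algebra is simple.
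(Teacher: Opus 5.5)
Your proof is correct and follows the paper's argument essentially step for step: the same skew-symmetry check, the same observation that $DJ_0$ equals the right-hand side of~\eqref{Jacobi1}, the same minimal-support-width contradiction on $U\otimes U$, and the same upward step via $\dbl e_n,e_{n+3}\dbr_0$. The only divergence is the downward step: the paper simply uses $\dbl e_{n-1},e_n\dbr_0=e_{n-1}\otimes e_{n-1}$, which needs only $e_n\in I$, whereas your $\dbl e_{n-2},e_{n+1}\dbr_0$ works only because $e_{n+1}\in I$ (from the upward step) puts the whole bracket in $I\otimes V+V\otimes I$. You should state that explicitly, and note that your discarded formula for $\dbl e_{n-3},e_n\dbr_0$ is the weight-one value; in weight zero it equals $e_{n-3}\otimes e_{n-1}+e_{n-2}\otimes e_{n-2}+e_{n-1}\otimes e_{n-3}$.
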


\begin{proof}
Switching $f$ and $g$ and then switching $x$ and $y$ gives $\dbl g,f\dbr_0(y,x)=-\dbl f,g\dbr_0(x,y)$, so the product is skew-symmetric.

For the Jacobi identity, repeat the computation from the weight-one case after deleting the second-variable factors $y$ and $z$ from every double product. 
If $J_0$ denotes the resulting Jacobi residual, then direct substitution gives
$DJ_0$ equals the right-hand side of~\eqref{Jacobi1}, where $D=(y-x)(z-x)(z-y)$.
Thus $J_0=0$, which is the ordinary double Jacobi identity.

Now, we prove simplicity.
Let $I\neq0$ be an ideal, and choose
$$
f=\sum_{r=a}^b c_re_r\in I,\qquad c_ac_b\neq0,
$$
of minimal support width $d=b-a$. Suppose that $d>0$, and set
$U=\Span\{e_a,e_{a+1},\dots,e_{b-1}\}$.
By Lemma~\ref{lem:support}, $I\cap U=0$. Therefore the quotient map $q\colon V\to V/I$ is injective on $U$, and $q\otimes q$ is injective on $U\otimes U$.

For $1\le r\le d$, formula~\eqref{eq:basis0} gives
$$
\dbl e_a,e_{a+r}\dbr_0=\sum_{s=0}^{r-1}e_{a+s}\otimes e_{a+r-1-s}\in U\otimes U.
$$
Consequently, $\dbl e_a,f\dbr_0\in U\otimes U$.
It is nonzero because its component whose two indices have sum $2a+d-1$ is
$c_b\sum_{s=0}^{d-1}e_{a+s}\otimes e_{b-1-s}\neq0$.
But $f\in I$, so this tensor belongs to $I\otimes V+V\otimes I=\ker(q\otimes q)$, a contradiction. It follows that $d=0$, and therefore $e_n\in I$ for some $n\in\mathbb Z$.

Now
$\dbl e_{n-1},e_n\dbr_0=e_{n-1}\otimes e_{n-1}$.
Hence, $e_{n-1}\in I$. To move in the other direction, use
$$
\dbl e_n,e_{n+3}\dbr_0=e_n\otimes e_{n+2}+e_{n+1}\otimes e_{n+1}+e_{n+2}\otimes e_n.
$$
The first and third summands already lie in $I\otimes V+V\otimes I$. 
Hence the middle summand also lies there, and so $e_{n+1}\in I$. Induction in both directions yields $I=V$.

The product is nonzero since $\dbl e_0,e_1\dbr_0=e_0\otimes e_0$. Thus the double Lie algebra is simple.
\end{proof}

\section{The bilateral matrix realizations}
We now explain precisely in what sense the preceding constructions come from $\mathbb Z\times\mathbb Z$ matrices.

Let $E_{ij}\in\End(V)$ be the matrix unit defined by 
$E_{ij}(e_m)=\delta_{jm}e_i$, $i,j,m\in\mathbb Z$.
Let
$$
I_{\mathbb Z}=\Span_k\{E_{ij}\colon i,j\in\mathbb Z\}
$$
be the finitary matrix algebra. Let $\Endrcf(V)$ denote the algebra of matrices having only finitely many nonzero entries in every row and every column. The matrix units form a~basis of $I_{\mathbb Z}$, but they do not form a~basis of the whole algebra $\End(V)$.

Define a linear map
$P_1\colon I_{\mathbb Z}\to\Endrcf(V)$
on matrix units by
\begin{equation}\label{eq:P1}
P_1(E_{ij})=
\begin{cases}
\displaystyle\sum_{p\ge1}E_{i+p,j+p}, & i\le j,\\[0.8em]
\displaystyle-\sum_{p\ge0}E_{i-p,j-p}, & i>j.
\end{cases}
\end{equation}
Each displayed infinite sum is a well-defined row-and-column-finite operator: when it is applied to a fixed basis vector, at most one summand acts nontrivially.

\begin{proposition}
The coefficient formula
\begin{equation}\label{eq:coef1}
\dbl e_a,e_b\dbr_1=\sum_{r\in\mathbb Z}e_r\otimes P_1(E_{ar})(e_b)
\end{equation}
is a finite sum and gives exactly the double bracket~\eqref{eq:basis1}.
\end{proposition}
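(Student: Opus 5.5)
We evaluate the right-hand side of~\eqref{eq:coef1} directly from the definition~\eqref{eq:P1}. For fixed $a,b,r$, the summand $e_r\otimes P_1(E_{ar})(e_b)$ depends on whether $a\le r$ or $a>r$. We determine, in each case, for which $r$ the operator $P_1(E_{ar})$ acts nontrivially on $e_b$. This simultaneously shows that only finitely many $r$ contribute and identifies the result.

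\emph{Case $a\le r$.} Here $P_1(E_{ar})=\sum_{p\ge1}E_{a+p,r+p}$. Applied to $e_b$, the only possible nonzero term is the one with $r+p=b$, which gives $e_{a+b-r}$. The condition $p=b-r\ge1$ means $r\le b-1$. So the contribution is $e_r\otimes e_{a+b-r}$ for $a\le r\le b-1$, and nothing otherwise. In particular there is no contribution when $a\ge b$.

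\emph{Case $a>r$.} Here $P_1(E_{ar})=-\sum_{p\ge0}E_{a-p,r-p}$. Applied to $e_b$, the only possible nonzero term has $r-p=b$, giving $-e_{a+b-r}$. The condition $p=r-b\ge0$ means $r\ge b$. So the contribution is $-e_r\otimes e_{a+b-r}$ for $b\le r\le a-1$, and nothing otherwise. In particular there is no contribution when $a\le b$.

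Both index ranges are finite, so the sum in~\eqref{eq:coef1} is finite. Summing the two cases:
\begin{itemize}
\item if $a<b$, only the first case contributes, giving $\sum_{r=a}^{b-1}e_r\otimes e_{a+b-r}$;
\item if $a>b$, only the second case contributes, giving $-\sum_{r=b}^{a-1}e_r\otimes e_{a+b-r}$;
\item if $a=b$, both ranges are empty and the sum is $0$.
\end{itemize}
These are exactly the three cases of~\eqref{eq:basis1} in Proposition~\ref{prop:Weight1}.

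The only delicate point is the bookkeeping at the boundaries. One must check that in the first case the range starts at $r=a$ because of the hypothesis $a\le r$ and ends at $b-1$ because $p\ge1$. In the second case the range ends at $a-1$ because $r<a$ and starts at $b$ because $p\ge0$. Once these endpoints are fixed, the ranges match~\eqref{eq:basis1} exactly, so I do not expect any real obstacle.
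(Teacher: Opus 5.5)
Your proposal is correct and follows the paper's proof: you split into the cases $a\le r$ and $a>r$, find the unique summand of $P_1(E_{ar})$ that acts nontrivially on $e_b$, and read off the finite ranges $a\le r\le b-1$ and $b\le r\le a-1$. Your boundary bookkeeping and your treatment of the $a=b$ case are also right, and they match~\eqref{eq:basis1} exactly.
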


\begin{proof}
Suppose first that $a\le r$. 
The first line of~\eqref{eq:P1} contributes on $e_b$ precisely when $r+p=b$ for some $p\ge1$. 
This is equivalent to $a\le r\le b-1$, and the resulting vector is $e_{a+b-r}$.
If $a>r$, the second line contributes precisely when $r-p=b$ for some $p\ge0$. 
This is equivalent to $b\le r\le a-1$, and the resulting vector is $-e_{a+b-r}$. 
These are exactly the two finite sums in~\eqref{eq:basis1}.
\end{proof}

For weight zero, define
$R_0\colon I_{\mathbb Z}\to\Endrcf(V)$ by
\begin{equation}\label{eq:R0}
R_0(E_{ij})=
\begin{cases}
\displaystyle-\sum_{p\ge0}E_{i-1-p,j-p}, & i>j,\\[0.8em]
\displaystyle\sum_{p\ge0}E_{i+p,j+1+p}, & i\le j.
\end{cases}
\end{equation}
This bilateral operator was recorded in Remark~4 of~\cite{Double-0} as the extension of the corresponding one-sided operator.

\begin{proposition}
The coefficient formula
\begin{equation}\label{eq:coef0}
\dbl e_a,e_b\dbr_0=\sum_{r\in\mathbb Z}e_r\otimes R_0(E_{ar})(e_b)
\end{equation}
is a finite sum and gives exactly the double bracket~\eqref{eq:basis0}.
\end{proposition}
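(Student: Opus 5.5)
We argue exactly as in the proof of the weight-one coefficient formula~\eqref{eq:coef1}: for fixed $a,b$ we determine, for each $r\in\mathbb Z$, whether $R_0(E_{ar})$ acts nontrivially on $e_b$, and if so, which vector it produces. By~\eqref{eq:R0}, each $R_0(E_{ar})$ is a sum of matrix units $E_{\alpha,\beta}$ along a single diagonal. Applied to $e_b$, at most one summand contributes, namely the one with column index $\beta=b$. So the only work is to solve for the parameter $p$ and to translate the constraint $p\ge0$ into a range of $r$.

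\emph{Case $a\le r$.} The summands are $E_{a+p,r+1+p}$ with $p\ge0$. One of them acts on $e_b$ exactly when $r+1+p=b$, that is, $p=b-1-r\ge0$. This holds precisely when $a\le r\le b-1$, and the output is $e_{a+p}=e_{a+b-1-r}$. So these terms contribute $\sum_{r=a}^{b-1}e_r\otimes e_{a+b-1-r}$, which is nonempty only when $a<b$.

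\emph{Case $a>r$.} The summands are $-E_{a-1-p,r-p}$ with $p\ge0$. One of them acts on $e_b$ exactly when $r-p=b$, that is, $p=r-b\ge0$. Together with $r<a$ this gives $b\le r\le a-1$, and the output is $-e_{a-1-(r-b)}=-e_{a+b-1-r}$. So these terms contribute $-\sum_{r=b}^{a-1}e_r\otimes e_{a+b-1-r}$, which is nonempty only when $a>b$.

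Adding the two cases, only finitely many $r$ contribute, so the sum in~\eqref{eq:coef0} is finite. If $a<b$, only the first case survives; if $a>b$, only the second; and if $a=b$, both ranges are empty and the sum is $0$. These are exactly the three lines of~\eqref{eq:basis0}.

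No step is a real obstacle. The only points needing care are the index shifts: the $+1$ in the column index for $i\le j$, and the $-1$ in the row index for $i>j$. One must check that these produce the same second tensor index $a+b-1-r$ in both cases, and that the boundary case $r=a$ is assigned to the first branch, consistent with~\eqref{eq:R0}.
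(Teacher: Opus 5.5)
Your proposal is correct and follows the paper's proof: the same case split on $a\le r$ versus $a>r$, solving $r+1+p=b$ and $r-p=b$ for $p\ge0$, giving the ranges $a\le r\le b-1$ and $b\le r\le a-1$ with outputs $\pm e_{a+b-1-r}$. You also spell out a little more explicitly than the paper does that at most one summand acts on $e_b$ and that both ranges are empty when $a=b$.
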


\begin{proof}
If $a>r$, the first line of~\eqref{eq:R0} acts nontrivially on $e_b$ precisely when $r-p=b$ for some $p\ge0$. Thus $b\le r\le a-1$, and the resulting vector is $-e_{a+b-1-r}$. If $a\le r$, the second line acts nontrivially precisely when $r+1+p=b$ for some $p\ge0$. Thus $a\le r\le b-1$, and the resulting vector is $e_{a+b-1-r}$. This is exactly~\eqref{eq:basis0}.
\end{proof}

Note that $R_0$ and $P_1$ as operators acting from $I_{\mathbb Z}$ to $\Endrcf(V)$ satisfy the relation
$$
Q(x)Q(y) = Q(Q(x)y + xQ(y) + \lambda xy)
$$
for $\lambda = 0$ and $\lambda = 1$, respectively.
With respect to the form $\langle x,y\rangle = \tr(xy)$,
one has
$$
R_0^* = -R_0, \quad
P_1^*(x) + P_1(x) + x = \tr(x)\id_V,
$$
this means that $R_0$ and $P_1$ satisfy the infinite-dimensional analogue of $\lambda$-skew-symmetry.

\section*{Acknowledgements}

The research was carried out within the framework of the Sobolev
Institute of Mathematics state contract (project FWNF-2026-0017).

\noindent Vsevolod Gubarev \\
Sobolev Institute of Mathematics \\
Acad. Koptyug ave. 4, 630090 Novosibirsk, Russia \\
Novosibirsk State University \\
Pirogova str. 1, 630090 Novosibirsk, Russia \\
e-mail: wsewolod89@gmail.com

\end{document}